\documentclass[12pt, reqno]{amsart}
\usepackage{amsmath, amsthm, amscd, amsfonts, amssymb, graphicx, color, mathrsfs}
\usepackage[bookmarksnumbered, colorlinks, plainpages]{hyperref}
\hypersetup{colorlinks=true,linkcolor=red, anchorcolor=green, citecolor=cyan, urlcolor=red, filecolor=magenta, pdftoolbar=true}
\usepackage{hyperref}
\newtheorem{theorem}{Theorem}[section]
\newtheorem{lemma}[theorem]{Lemma}

\newtheorem{corollary}[theorem]{Corollary}
\theoremstyle{definition}

\theoremstyle{remark}
\newtheorem{remark}[theorem]{Remark}
\numberwithin{equation}{section}

\begin{document}

\setcounter{page}{1}

\title[The backward problem for a multi-term  \dots ]{The backward problem for a multi-term time-fractional diffusion equation
}

\author[R. Ashurov and  D. Shamuratov]{Ravshan Ashurov and Damir Shamuratov}

\address{\textcolor[rgb]{0.00,0.00,0.84}{Ravshan Ashurov:
V. I. Romanovskiy Institute of Mathematics Uzbekistan Academy of Sciences,
University street 9, Tashkent--100174,
Uzbekistan
}}
\email{\textcolor[rgb]{0.00,0.00,0.84}{ashurovr@gmail.com}}

\address{\textcolor[rgb]{0.00,0.00,0.84}{Damir Shamuratov:
V. I. Romanovskiy Institute of Mathematics Uzbekistan Academy of Sciences,
University street 9, Tashkent--100174,
Uzbekistan}}
\email{\textcolor[rgb]{0.00,0.00,0.84}{damirshamuratov5@gmail.com}}


\subjclass[2010]{35R11, 35R30, 34A12}

\keywords{multi-term fractional diffusion equation, backward problem, Caputo derivative, multinomial Mittag-Leffler function}

\begin{abstract} This paper is devoted to the investigation of the backward problem for a multi-term time-fractional diffusion equation. Backward problems for fractional diffusion equations are typically studied using regularization methods due to their ill-posedness in the sense of Hadamard; that is, a small change in $u(T)$ may lead to large changes in the initial data. Nevertheless, we show that if sufficiently smooth current data are considered, then the solution exists, is unique, and is stable. A principal difficulty in the analysis of the backward problem stems from the structure of the solution, in which the multinomial Mittag--Leffler function appears in the denominator. Accordingly, a precise characterization of the asymptotic behavior of this function is required. Such asymptotic properties are nontrivial and have been rigorously established in the authors’ recent work, which serve as a fundamental basis for the present study. In addition, we investigate the conditional stability of the backward problem. It is shown that, although the problem is ill-posed in general, stability can be restored under an appropriate a priori bound imposed on the initial data. The main novelty of the paper lies in proving the best smoothing property of the
solution, showing that it belongs to the domain of the operator $A$ for any positive time.
\end{abstract} \maketitle

\section{Introduction}

The multi-term time fractional diffusion equation is a special type of fractional diffusion equation that can improve the efficiency of studying anomalous diffusion problems. This model can describe the diffusion phenomenon of a solute in a multi-scale medium. Such processes provide useful models for a wide range of non-homogeneous and non-stationary phenomena. For example, as shown by \cite{Hatano}, diffusion equations with time-fractional derivatives perform well in describing the long-tailed profile of a particle diffusing in a highly heterogeneous medium. Additionally, kinetic equations incorporating two fractional derivatives of different orders have been employed to describe subdiffusive motion in velocity fields. These multi-term fractional differential equations can accurately capture anomalous diffusion phenomena in complex systems and highly heterogeneous aquifers \cite{Schumer}. Therefore, the study of multi-term fractional diffusion equations is of great significance in physics and engineering.

Let $A$ be an arbitrary positive self-adjoint operator defined in a separable Hilbert space $H$ with domain $D(A)$. Denote the inner product in $H$ by $(\cdot,\cdot)$ and the norm by $\|\cdot\|$. Suppose further that the inverse operator $A^{-1}$ is compact. 
Then it is well known that $A$ possesses a complete orthonormal system of eigenfunctions $\{v_k\}$ and a countable set of positive eigenvalues $\{\lambda_k\}$ with a unique accumulation point at $+\infty$. 
We assume that the eigenvalues are ordered in a nondecreasing manner, i.e.,
\[
0 < \lambda_1 \leq \lambda_2 \leq \cdots \to +\infty.
\]

The Caputo fractional derivative of order $0<\rho_j<1$ $(j=1,\dots,M)$ for a function $h(t)$ is defined as (see \cite{Kilbas}, p. 90):
\[
\partial_t^{\rho_j} h(t) =
\frac{1}{\Gamma(1-\rho_j)} 
\int_0^t 
\frac{h'(s)}{(t-s)^{\rho_j}} ds, \quad t>0,
\]
provided that the right-hand side exists. Here, \(\Gamma(\cdot)\) is Euler’s gamma function.

In this paper, we consider the backward problem for the multi-term time-fractional diffusion equation with Caputo fractional derivatives.

Consider the Cauchy-type multi-term problem with inverse time:
\begin{equation}\label{1.1}
\begin{cases}
\displaystyle \sum_{j=1}^{M} q_j \, \partial_t^{\rho_j} u(t) + A u(t) = f(t), & 0 < t < T, \\[1mm]
u(T) = \Phi,
\end{cases}
\end{equation}
where \(0<\rho_M<\rho_{M-1}<\dots<\rho_1< 1\), \(q_j>0\) for \(j=1,\dots,M\) (\(q_1 = 1\) without loss of generality), \(\Phi \in H\), and \(f(t) \in C((0, T); H)\) are given vectors. This problem is called the \emph{backward problem}.

The standard formulation of the Cauchy problem for equation \eqref{1.1} has the form:
\begin{equation}\label{1.2}
\begin{cases}
\displaystyle \sum_{j=1}^{M} q_j \, \partial_t^{\rho_j} u(t) + A u(t) = f(t), & 0 < t < T, \\[1mm]
u(0) = \varphi,
\end{cases}
\end{equation}
where \(\varphi \in H\) is a given vector. This problem is called the \emph{forward problem}. To investigate the backward problem, one usually uses the properties of the solution to the forward problem.

The backward problem for diffusion processes plays a significant role in engineering, as it aims to recover the previous state of a physical field from its final data. Such problems have numerous practical applications. In particular, they characterize the blurring effect, while the corresponding backward formulations provide a mathematical framework for the deblurring process in image restoration. However, whether the Riemann–Liouville or Caputo fractional derivative is used, these problems remain \emph{ill-posed} in the sense of Hadamard. This means that even a very small change in the final data \(u(T)\) can cause large changes in the initial data $u(0)$. The backward problem of the classical diffusion equation is severely ill-posed (e.g., Isakov \cite{Isakov}, p. 21), and any Lipschitz-type estimate in the Sobolev norm is impossible.

Compared with the backward problem of classical heat equations (retrospective inverse problem, see \cite{Kabanikhin}, p. 214), studying the backward problem for fractional equations, especially the multi-term time-fractional diffusion equation is more challenging. For example, under the time reversal \(t' = T - t\), we have
\[
\partial_t = - \partial_{t'} \quad \text{while} \quad \partial_t^{\rho_j} \neq - \partial_{t'}^{\rho_j} \quad \text{for} \quad \rho_j \in (0, 1), \quad j=1,\dots,M,
\]
which prevents the direct application of standard techniques for the classical diffusion equation. Moreover, the appearance of the multinomial Mittag-Leffler function makes the study of forward and backward problems for fractional equations more complicated in both theoretical analysis and numerical calculations.

In the case of the Caputo time derivative, the backward problem for the single-term fractional diffusion equation for various elliptic differential operators $A$ has been considered by a number of authors. Let us mention only some of
these works. For the case of the second-order symmetric elliptic operator \(A\), Sakamoto and Yamamoto \cite{SakamotoYamamoto} established the unique existence of weak solutions and proved the stability of the backward problem in time, as well as the uniqueness in determining the initial value. The non-symmetric case was considered by Florida, Li, and Yamamoto \cite{Florida}.
In the case of the Riemann–Liouville time derivative, the backward problem for the single-term fractional diffusion equation was studied by Alimov and Ashurov \cite{AlimovAshurov}. 

Since backward problems for single-term or multi-term time-fractional diffusion equations are ill-posed, many authors have considered various regularization methods to determine the initial condition. In the single-term case, the authors in \cite{TingWei} formulated the backward problem as a variational problem using the Tikhonov regularization method and obtained an approximation to the minimizer of the variational problem by employing a conjugate gradient method. In \cite{Tuan}, the authors considered a backward problem for a time-space fractional diffusion equation with a nonlinear source. Under certain assumptions, they established the existence and uniqueness of local mild solutions to the nonlinear problem and proposed a regularization method to approximate the solution. Furthermore, the convergence rate of the regularized solution was established. In \cite{Liu}, the backward problem in the one-dimensional case is addressed using a quasi-reversibility regularization scheme, accompanied by a full theoretical analysis and numerical tests. In the multi-term case, the authors in \cite{JinWen} investigated a backward problem for the multi-term time–space fractional diffusion equation, which was ill-posed. Using the quasi-reversibility regularization method, they provided a regularized solution based on properties of the Fourier transform and Mittag-Leffler functions. In \cite{Sun}, the authors focused on the well-posedness and convergence analysis of the solution to the backward problem using the fractional-order quasi-reversibility method.

It is worth mentioning that the problem \eqref{1.1} is ill-posed in the sense of Hadamard due to the lack of stability of the solution. Nevertheless, we will show that if sufficiently smooth current information is considered, the solution exists and is unique. Our main contribution lies in proving the best possible smoothing property for the backward problem \eqref{1.1}, namely,
$ u(t) \in D(A)$ for any $t > 0$. In Theorem \ref{thm4.1}, it is shown that the smoothing in \(D(A)\) is the best possible, and the solution cannot be smoother than \(D(A)\) for \(t > 0\) if \(\varphi \in H\). To prove this, we require the asymptotic behavior of the multinomial Mittag-Leffler function, since it appears in the denominator of the solution. As shown in \cite{AshurovShamuratov}, the asymptotic behavior of the multinomial Mittag-Leffler function appearing in our solution is provided and proved.

The remainder of this paper is organized as follows. In Section \ref{sec:2}, some preliminaries are presented. In Section \ref{sec:3}, the forward problem for a multi-term time-fractional differential equation is studied. Under appropriate assumptions on the initial data and the source term, the existence and uniqueness of the solution are established, and derive an a priori estimate that is essential for the analysis of the backward problem. In Section \ref{sec:4}, we investigate the backward problem. First, we show that the problem is ill-posed in the sense of Hadamard. Then, under suitable assumptions, we establish existence and uniqueness results and derive corresponding stability estimates. Moreover, we obtain a conditional stability result under an appropriate a priori bound on the initial data. Finally, Section \ref{sec5} contains the conclusion.
\section{Preliminaries}
\label{sec:2}
In this section, we recall several lemmas and auxiliary results that will be used throughout the paper.

Let $\varepsilon$ be an arbitrary real number. We introduce the fractional power of the operator $A$ acting in $H$ according to the following rule:
$$
\begin{cases}
A^{\varepsilon} g = \displaystyle \sum_{k=1}^{\infty} \lambda_k^{\varepsilon} g_k\,v_k, \\[2mm]
D(A^{\varepsilon}) = \left\{ g \in H : \displaystyle \sum_{k=1}^{\infty} \lambda_k^{2\varepsilon} |g_k|^2 < \infty \right\}, \\[2mm]
\|A^{\varepsilon} g\|^2 =  \displaystyle \sum_{k=1}^{\infty} \lambda_k^{2\varepsilon} |g_k|^2 =\|g\|_{\varepsilon}^2,
\end{cases}
$$
where $g_k = (g, v_k)$ are the Fourier coefficients of a function $g \in H$.

The solution of problem \eqref{1.1} involves the multivariable (multinomial) Mittag--Leffler function. Therefore, we first present the definition of the multinomial Mittag--Leffler function.

The multinomial Mittag–Leffler function is defined as  \cite{MasahiroYamamoto}

\[
E_{(\beta_1, \dots, \beta_M),\, \beta_0}(z_1, \dots, z_M) =
\sum_{k=0}^{\infty} 
\sum_{k_1 + \dots + k_M = k} 
\binom{k}{k_1, \dots, k_M} 
\prod_{j=1}^M z_j^{k_j} 
\frac{1}{\Gamma(\beta_0 + \sum_{j=1}^M \beta_j k_j)},
\]
where we assume \( \beta_0 >0\), \(0 < \beta_j < 1\), \(z_j \in \mathbb{C}\) (\(j = 1, \dots, M\)), and \(\binom{k}{k_1, \dots, k_M}\) denotes
the multinomial coefficient
\[
\binom{k}{k_1, \dots, k_M} = \frac{k!}{k_1! \cdots k_M!},\quad
\text{with} \quad
k = \sum_{j=1}^{M} k_j,
\]
where \(k_j\), \(1 \le j \le M\), are non-negative integers.

To derive an upper bound for the multinomial Mittag–Leffler function and to prove the convergence of the solution, we recall the following lemmas.
\begin{lemma}\label{l2.1} \cite{MasahiroYamamoto} Let $0<\beta<2$ and
$1>\rho_1>\cdots>\rho_M>0$ be given. Assume that $\frac{\rho_1\pi}{2}<\mu<\rho_1\pi,\,\,
\mu \le |\arg(z_1)| \le \pi,
$
and there exists a constant $K>0$ such that
$
-K \le z_j < 0, j=2,\dots,M.
$
Then there exists a constant $C>0$, depending only on $\mu$, $K$, $\rho_j$ $(j=1,\dots,M)$
and $\beta$, such that
\[
\left|
E_{(\rho_1,\rho_1-\rho_2,\dots,\rho_1-\rho_M),\beta}
(z_1,\dots,z_M)
\right|
\le
\frac{C}{1+|z_1|}.
\]
\end{lemma}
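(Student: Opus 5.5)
The plan is to follow the Li–Liu–Yamamoto approach. First write the multinomial function as a contour integral via its Laplace transform, then estimate that integral in the spirit of the classical single-term bound $|E_{\rho,\beta}(z)|\le C/(1+|z|)$ for $\mu\le|\arg z|\le\pi$.

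\textbf{Step 1: Laplace transform.} Interchanging sums (valid by absolute convergence for $|z_1 s^{-\rho_1}|+\dots$ small) gives, for $\operatorname{Re}s$ large,
\[
\int_0^\infty e^{-st}t^{\beta-1}E_{(\rho_1,\rho_1-\rho_2,\dots,\rho_1-\rho_M),\beta}(z_1t^{\rho_1},z_2t^{\rho_1-\rho_2},\dots,z_Mt^{\rho_1-\rho_M})\,dt
=\frac{s^{\rho_1-\beta}}{s^{\rho_1}-z_1-\sum_{j\ge2}z_js^{\rho_j}} .
\]
The multinomial sum collapses into a geometric series in $(z_1+\sum_j z_j s^{\rho_j})s^{-\rho_1}$. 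The left side is entire in the $z$'s, so I will rescale to $t=1$. Inverting on a Hankel contour $\gamma(\epsilon,\theta)$ then gives the representation
\[
E(z_1,\dots,z_M)=\frac{1}{2\pi i}\int_{\gamma}\frac{e^{s}s^{\rho_1-\beta}}{s^{\rho_1}-z_1-\sum_{j\ge2}z_js^{\rho_j}}\,ds .
\]
Here $\gamma$ consists of the rays $\arg s=\pm\theta$, $|s|\ge\epsilon$, and the arc $|s|=\epsilon$. I fix $\theta$ with $\pi/2<\theta<\min(\pi,\mu/\rho_1)$, which is possible since $\mu>\rho_1\pi/2$.

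\textbf{Step 2: Lower bound for the denominator on $\gamma$.} This is the key step and the main obstacle. On $\arg s=\pm\theta$ we have $\arg(s^{\rho_1})=\pm\rho_1\theta$, with $\rho_1\theta<\mu\le|\arg z_1|$, so $s^{\rho_1}$ and $-z_1$ are separated in angle. This gives $|s^{\rho_1}-z_1|\ge c(|s|^{\rho_1}+|z_1|)$ with $c$ depending on $\mu-\rho_1\theta$.

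The perturbation is $p(s)=\sum_{j\ge2}(-z_j)s^{\rho_j}$ with $-z_j\in(0,K]$. Its argument is $\pm\rho_j\theta$, which lies between $0$ and $\pm\rho_1\theta$. So $s^{\rho_1}+p(s)$ lies in the sector $|\arg|\le\rho_1\theta<\mu$. This is where the sign condition $z_j<0$ is essential.

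It follows that $|s^{\rho_1}+p(s)-z_1|\ge c\,(|s^{\rho_1}+p(s)|+|z_1|)\ge c'|z_1|$, and also $\ge c'|s|^{\rho_1}$. For the latter I use that the real-direction components add coherently, since all arguments lie in $[0,\rho_1\theta]$ with $\rho_1\theta<\pi/2$.

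On the small arc the same sector argument applies for every $|\arg s|\le\theta$. The denominator therefore never vanishes, which also justifies the contour deformation.

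\textbf{Step 3: Estimate the integral.} With $|{\rm denominator}|\ge c(|s|^{\rho_1}+|z_1|)$, I split into two cases.

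For $|z_1|\le 1$ I take $\epsilon=1$ and bound the integrand by $C|e^s||s|^{\rho_1-\beta}/|s|^{\rho_1}$. The factor $e^{|s|\cos\theta}$ decays exponentially, so this gives a constant $\le C\le 2C/(1+|z_1|)$.

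For $|z_1|>1$ I use the bound $\le C/|z_1|$ on the denominator. The integral then becomes $\frac{C}{|z_1|}\int_\gamma|e^s||s|^{\rho_1-\beta}|ds|$ with $\epsilon=1$. This is finite because the exponential decays on the rays and the arc is compact; in particular no singularity at $0$ arises even for $\beta$ near $2$.

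Combining the two cases gives $C/(1+|z_1|)$. The constant depends only on $\theta$ (hence $\mu$), $K$, the $\rho_j$ and $\beta$. The only delicate point is the uniform sector bound of Step 2, where $K$ enters to control $|p(s)|\le MK|s|^{\rho_2}$ on the arc.
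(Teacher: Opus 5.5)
Your proof is correct and follows the same route as the cited source. The paper gives no argument of its own and defers to \cite{MasahiroYamamoto}, where the bound likewise comes from a Hankel-type contour representation of the multinomial function, estimated as in the classical single-term bound. One small repair is needed in Step 2: your range $\pi/2<\theta<\min(\pi,\mu/\rho_1)$ does not force $\rho_1\theta<\pi/2$ when $\rho_1>1/2$, so either also require $\theta<\pi/(2\rho_1)$, or project onto the direction $e^{i\rho_1\arg s/2}$ instead of the real axis, which gives $|s^{\rho_1}+p(s)|\ge\cos(\rho_1\theta/2)\,|s|^{\rho_1}$. This cone argument never uses $K$, so your closing remark about $|p(s)|\le MK|s|^{\rho_2}$ is unnecessary and your constant is in fact independent of $K$.
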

For later use, we adopt the abbreviation
$$
E_{\rho',\rho_1}(-\lambda_k t^{\rho_1},*)=E_{(\rho_1,\rho_1-\rho_2,...,\rho_1-\rho_M),\rho_1}(-\lambda_k t^{\rho_1},-q_2t^{\rho_1-\rho_2},...,-q_Mt^{\rho_1-\rho_M}),
$$
$$
\rho'=(\rho_1,\rho_1-\rho_2,...,\rho_1-\rho_M).
$$

We will also use a coarser estimate with positive eigenvalues $\lambda_k$ and $0<\varepsilon<1$:
\begin{equation}\label{2.1}
\left| t^{\rho_1-1} E_{\rho',\rho_1}(-\lambda_k t^{\rho_1},*) \right|
\le
\frac{C t^{\rho_1-1}}{1+\lambda_k t^{\rho_1}}
\le
C \lambda_k^{\varepsilon-1} t^{\varepsilon \rho_1-1}, 
\qquad t>0,
\end{equation}
which is easy to verify. Indeed, let $t^{\rho_1}\lambda_k<1$, then $t<\lambda_k^{-1/\rho_1}$ and
\[
t^{\rho_1-1}
=
t^{\rho_1-\varepsilon\rho_1} t^{\varepsilon\rho_1-1}
<
\lambda_k^{\varepsilon-1} t^{\varepsilon\rho_1-1}.
\]

If $t^{\rho_1}\lambda_k \ge 1$, then $\lambda_k^{-1} \le t^{\rho_1}$ and
\[
\lambda_k^{-1} t^{-1}
=
\lambda_k^{-1+\varepsilon}\lambda_k^{-\varepsilon} t^{-1}
\le
\lambda_k^{\varepsilon-1} t^{\varepsilon\rho_1-1}.
\]
\begin{lemma}\label{l2.4}\cite{AshurovShamuratov}
    Let $ \beta > 2\rho_1$ and $1 > \rho_1 > \cdots > \rho_M > 0$ be given. 
Assume that $\rho_1 \pi/2 < \mu < \rho_1 \pi$, 
and there exists $K > 0$ such that $-K \le z_j < 0 \; (j = 2, \ldots, M)$. Then, we have the following asymptotic formulas in which $p$ is an arbitrary positive integer:
$$
E_{{\rho}',\beta}(z_1, \ldots, z_M)=-\frac{1}{z_1}\frac{1}{\Gamma(\beta-\rho_1)}-\frac{1}{z_1^2}\left(\frac{1}{\Gamma(\beta-2\rho_1)}-
\sum_{j=2}^{M}
\frac{z_j}{\Gamma(\beta-\rho_1-\rho_j)}\right)$$
$$
-\sum_{k=3}^p \frac{C_k(z_2,..,z_M,\rho_j,\beta)}{z_1^k}+O\left(|z_1|^{-p-1}\right), \quad  |z_1|\to\infty , \quad  \mu \le |\arg(z_1)| \le \pi.
$$
\end{lemma}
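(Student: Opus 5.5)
The plan is to derive the expansion from a Hankel-type contour integral representation of the multinomial Mittag--Leffler function. I would then expand the resulting resolvent-type kernel in a finite geometric series in powers of $1/z_1$.

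\emph{Step 1: integral representation.} Using the multinomial theorem, the series for $t^{\beta-1}E_{\rho',\beta}(z_1t^{\rho_1},z_2t^{\rho_1-\rho_2},\dots,z_Mt^{\rho_1-\rho_M})$ can be Laplace-transformed termwise. For $\operatorname{Re}s$ large this gives
\[
\frac{s^{-\beta}}{1-z_1s^{-\rho_1}-\sum_{j=2}^M z_js^{-(\rho_1-\rho_j)}}
=\frac{s^{\rho_1-\beta}}{s^{\rho_1}-z_1-\sum_{j=2}^M z_js^{\rho_j}} .
\]
Inverting at $t=1$ and deforming the Bromwich line into a Hankel contour $\gamma(1,\theta)$ yields
\[
E_{\rho',\beta}(z_1,\dots,z_M)=\frac{1}{2\pi i}\int_{\gamma(1,\theta)}\frac{e^s s^{\rho_1-\beta}}{w(s)-z_1}\,ds,
\qquad w(s)=s^{\rho_1}-\sum_{j=2}^M z_js^{\rho_j}.
\]
The contour $\gamma(1,\theta)$ consists of the two rays $\arg s=\pm\theta$, $|s|\ge1$, joined by the arc $|s|=1$. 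I would take $\theta\in(\pi/2,\mu/\rho_1)$, which is possible since $\mu>\rho_1\pi/2$.

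\emph{Step 2: the key lower bound.} I expect the main obstacle to be showing that, for $z_1$ in the sector $\mu\le|\arg z_1|\le\pi$ with $|z_1|$ large and $-K\le z_j<0$, the denominator $w(s)-z_1$ has no zeros on or to the right of the contour. More precisely, I want
\[
|w(s)-z_1|\ge c\,(|w(s)|+|z_1|),\qquad s\in\gamma(1,\theta),
\]
with $c>0$ depending only on $\mu,\theta,K,\rho_j$. On the contour, $|\arg s^{\rho_1}|\le\rho_1\theta<\mu$. The lower-order terms $-z_js^{\rho_j}=|z_j|s^{\rho_j}$ have arguments of modulus at most $\rho_j\theta<\rho_1\theta$, and their moduli are bounded by $K|s|^{\rho_j}$. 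Hence, for $|s|$ large, $w(s)$ lies in a closed sector $|\arg w|\le\mu'<\mu$. On the compact part of the contour, $w(s)$ is bounded, so $|w(s)-z_1|\ge|z_1|/2$ once $|z_1|$ is large. Since $z_1$ has argument at least $\mu$ in modulus, the angle between $w$ and $z_1$ is bounded below, and the elementary inequality $|a-b|\ge\sin(\delta/2)(|a|+|b|)$ for vectors separated by an angle $\delta$ then gives the bound. The same argument justifies the contour deformation in Step 1, because it shows there are no poles in the region swept out. I would first try to handle the compact part and the rays separately in exactly this way.

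\emph{Step 3: expansion and remainder.} I write
\[
\frac{1}{w-z_1}=-\sum_{k=0}^{p}\frac{w^k}{z_1^{k+1}}+\frac{w^{p+1}}{z_1^{p+1}(w-z_1)} .
\]
Each polynomial term $w^k s^{\rho_1-\beta}$ is a finite combination of powers $s^{-\gamma}$. Hankel's formula $\frac{1}{2\pi i}\int_{\gamma}e^s s^{-\gamma}\,ds=1/\Gamma(\gamma)$ integrates these termwise. The $k=0$ term gives $-\frac{1}{z_1\Gamma(\beta-\rho_1)}$. The $k=1$ term gives $-\frac{1}{z_1^2}\bigl(\frac{1}{\Gamma(\beta-2\rho_1)}-\sum_{j}\frac{z_j}{\Gamma(\beta-\rho_1-\rho_j)}\bigr)$. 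The higher terms define the coefficients $C_k(z_2,\dots,z_M,\rho_j,\beta)$, which are polynomials in $z_j$ and hence bounded because $|z_j|\le K$.

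For the remainder I use Step 2:
\[
\Bigl|\frac{w^{p+1}}{z_1^{p+1}(w-z_1)}\Bigr|\le\frac{C|w|^{p+1}}{|z_1|^{p+2}}\le\frac{C(1+|s|)^{(p+1)\rho_1}}{|z_1|^{p+2}}.
\]
Along the rays $|e^s|=e^{|s|\cos\theta}$ with $\cos\theta<0$, so the integral of $e^s s^{\rho_1-\beta}(1+|s|)^{(p+1)\rho_1}$ converges absolutely. The arc stays away from $s=0$. This makes the remainder $O(|z_1|^{-p-2})\subset O(|z_1|^{-p-1})$, uniformly in the sector. The condition $\beta>2\rho_1$ guarantees that the leading Gamma arguments $\beta-\rho_1$ and $\beta-2\rho_1$ are positive. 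Since $1/\Gamma$ is entire, the Hankel formula holds for all exponents anyway, so this condition only affects the interpretation and nonvanishing of the leading coefficients, not the validity of the expansion.
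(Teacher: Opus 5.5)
Your proposal is correct and follows essentially the route the paper indicates. The paper only sketches this step, deferring to the integral representation of the multinomial Mittag--Leffler function and the classical Mittag--Leffler asymptotic argument. Your Hankel-contour representation in the variable $s$ is that representation after the substitution $\zeta=s^{\rho_1}$, and the remaining steps are exactly the classical ones: the finite geometric expansion of $1/(w(s)-z_1)$, Hankel's formula for the polynomial terms, and the uniform bound $|w(s)-z_1|\ge c\,(|w(s)|+|z_1|)$ for the remainder, with the lower-order terms $|z_j|s^{\rho_j}$ handled correctly. Your indexing gives terms through $z_1^{-p-1}$ and a remainder $O(|z_1|^{-p-2})$; since the $C_k$ are bounded for $|z_j|\le K$, this implies the stated form.
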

The proof follows from the integral representation of the multinomial Mittag–Leffler function given in \cite{MasahiroYamamoto}. Using this representation and proceeding similarly to the derivation of the classical asymptotic properties of the Mittag–Leffler function (see \cite{Gorenflo}), we obtain the desired result. For more details, see \cite{AshurovShamuratov}.

\section{Asymptotic behavior of the multinomial Mittag-Leffler function}
\label{sec:3}
The investigation of the backward problem is usually based on the properties of the solution to the forward problem.
In this section, the forward problem for a multi-term time-fractional differential equation is studied, and an estimate \eqref{thm3.1}, which is necessary for the backward problem, is also proved.

\begin{theorem}\label{thm3.1} Let $\varphi \in H$ and  $f(t) \in C([0,T]; D(A^{\varepsilon}))$ for some $\varepsilon \in(0,1)$. 
Then, the forward problem \eqref{1.2} has a unique solution.
Moreover, there exists a constant \(C\), depending on \(\varepsilon\) and $\rho_1$, such that
\begin{equation}\label{t3.1}
\|u(t)\|_1\le C \Biggl( \|\varphi\| \sum_{j=1}^{M} t^{- \rho_j} + \max_{t \in [0,T]} \|  f(t) \|_{\varepsilon} \Biggr), \quad t>0.
\end{equation}
\end{theorem}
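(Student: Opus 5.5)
We would use the Fourier method with respect to the eigenfunctions $\{v_k\}$ of $A$. Expanding $u(t)=\sum_k u_k(t)v_k$, $\varphi_k=(\varphi,v_k)$, $f_k(t)=(f(t),v_k)$, each coefficient must solve the scalar multi-term problem
\[
\sum_{j=1}^M q_j\partial_t^{\rho_j}u_k+\lambda_k u_k=f_k(t),\qquad u_k(0)=\varphi_k .
\]
We would take from \cite{MasahiroYamamoto} (Li--Yamamoto) its explicit solution
\[
u_k(t)=\varphi_k\Bigl(1-\lambda_k t^{\rho_1}E_{\rho',1+\rho_1}(-\lambda_k t^{\rho_1},*)\Bigr)+\int_0^t (t-s)^{\rho_1-1}E_{\rho',\rho_1}(-\lambda_k (t-s)^{\rho_1},*)f_k(s)\,ds .
\]
The candidate solution is $u(t)=\sum_k u_k(t)v_k$.

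\textbf{Uniqueness.} If $\varphi=0$ and $f=0$, then every $u_k$ solves the homogeneous scalar problem with zero initial datum. Hence $u_k\equiv0$, which we would justify by the Laplace transform or by uniqueness for the scalar multi-term equation. Completeness of $\{v_k\}$ then gives $u\equiv 0$.

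\textbf{Estimate of the homogeneous part.} The key step is to rewrite the factor in front of $\varphi_k$ using the series definition of the multinomial function:
\[
1-\lambda_k t^{\rho_1}E_{\rho',1+\rho_1}(-\lambda_k t^{\rho_1},*)=\sum_{j=1}^M q_j\, t^{\rho_1-\rho_j}E_{\rho',1+\rho_1-\rho_j}(-\lambda_k t^{\rho_1},*) .
\]
For $j=1$ the summand is $E_{\rho',1}$. The indices $\beta=1+\rho_1-\rho_j$ all lie in $(0,2)$, so Lemma \ref{l2.1} applies, with $K=\max_j q_jT^{\rho_1-\rho_j}$ and $t\le T$. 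This gives
\[
\lambda_k\, t^{\rho_1-\rho_j}\bigl|E_{\rho',1+\rho_1-\rho_j}(-\lambda_k t^{\rho_1},*)\bigr|\le \frac{C\lambda_k t^{\rho_1-\rho_j}}{1+\lambda_k t^{\rho_1}}\le C t^{-\rho_j}.
\]
Summing the squares over $k$ yields the term $C\|\varphi\|\sum_j t^{-\rho_j}$ in \eqref{t3.1}.

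\textbf{Estimate of the source part.} We multiply by $\lambda_k$ and use \eqref{2.1}:
\[
\lambda_k\bigl|(t-s)^{\rho_1-1}E_{\rho',\rho_1}(-\lambda_k (t-s)^{\rho_1},*)\bigr|\le C\lambda_k^{\varepsilon}(t-s)^{\varepsilon\rho_1-1}.
\]
Minkowski's integral inequality in $\ell^2$ then gives
\[
\Bigl\|A\int_0^t\cdots\Bigr\|\le C\int_0^t (t-s)^{\varepsilon\rho_1-1}\|f(s)\|_\varepsilon\,ds\le \frac{CT^{\varepsilon\rho_1}}{\varepsilon\rho_1}\max_{[0,T]}\|f\|_\varepsilon .
\]
This gives \eqref{t3.1}, with a constant depending on $\varepsilon,\rho_1$ (and on $T$ and the $q_j$).

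\textbf{Existence: the main obstacle.} It remains to show that the series actually is a solution. Concretely, we must show that $Au\in C((0,T];H)$ and that $\sum_j q_j\partial_t^{\rho_j}u$ is well-defined and equals $f-Au$; the initial condition $u(t)\to\varphi$ must also be checked. The bounds above, applied to partial sums, give uniform convergence of $\sum_k\lambda_k u_k v_k$ on $[\delta,T]$, so $Au$ is continuous for $t>0$. From the equation, $\sum_j q_j\partial_t^{\rho_j}u=f-Au$ holds coefficientwise, and this series converges in $C((0,T];H)$. Passing the Caputo derivatives through the sum is the delicate point. We would first try to do it at the level of the fractional integral: apply $\sum_j q_j J^{1-\rho_j}$ to $u-\varphi$ termwise, where $L^1$ convergence suffices, and then differentiate. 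Continuity at $t=0$ in $H$ would follow from $|u_k(t)|\le C(|\varphi_k|+\ldots)$ together with dominated convergence over $k$.
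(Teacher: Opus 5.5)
Your proposal is correct and follows essentially the same route as the paper: the Fourier expansion with the Li--Yamamoto formula, the bound $|1-\lambda_k t^{\rho_1}E_{\rho',\rho_1+1}(-\lambda_k t^{\rho_1},*)|\le C\sum_j t^{\rho_1-\rho_j}/(1+\lambda_k t^{\rho_1})$ for the homogeneous part, estimate \eqref{2.1} with the generalized Minkowski inequality for the source part, and coefficientwise uniqueness. The differences are minor: you derive that homogeneous bound from Lemma~\ref{l2.1} via the decomposition identity, where the paper simply cites it, and your existence argument (uniform convergence on $[\delta,T]$, passing the derivatives through the sum at the level of $J^{1-\rho_j}$, continuity at $t=0$) is more careful than the paper's, which only bounds $\|Au_n\|$ and $\|\sum_j q_j\partial_t^{\rho_j}u_n\|$.
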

\begin{proof}
    Let us introduce the following formal series
\begin{equation}\label{3.1}
    u(t)=\sum_{k=1}^{\infty}\left[\varphi_k\left(1 - \lambda_k t^{\rho_1} E_{\rho',\rho_1+1}(-\lambda_k t^{\rho_1},*)\right)+\int_0^tf_k(t-\xi)\xi^{\rho_1-1}E_{\rho',\rho_1}(-\lambda_k \xi^{\rho_1},*)d\xi\right]v_k,
\end{equation}
where $\varphi_k$ and $f_k(t)$ are the Fourier coefficients of $\varphi$ and $f(t)$, respectively.
One can easily verify that the function \eqref{3.1} formally satisfies the conditions of problem \eqref{1.2} (see \cite{MasahiroYamamoto}).
In order to prove that the function defined by \eqref{3.1} is indeed a solution to the problem, it remains to justify this formal argument, namely, to show that the operators $A$ and $\partial_t^{\rho_j}$ can be applied term by term to the series \eqref{3.1}.
For convenience, we decompose the series \eqref{3.1} into two sums $\left(u(t)=u^1(t)+u^2(t)\right)$:
\begin{equation}\label{sum1}
    u^1(t)=\sum_{k=1}^{\infty}\varphi_k\left(1 - \lambda_k t^{\rho_1} E_{\rho',\rho_1+1}(-\lambda_k t^{\rho_1},*)\right)v_k,
\end{equation}
\begin{equation}\label{sum2}
    u^2(t)=\sum_{k=1}^{\infty}\left[\int_0^tf_k(t-\xi)\xi^{\rho_1-1}E_{\rho',\rho_1}(-\lambda_k \xi^{\rho_1},*)d\xi \right]v_k.
\end{equation}

\emph{Existence}.
Let $u_n(t)=u^1_n(t)+u^2_n(t)$ be the partial sum of series \eqref{3.1}. Then
\[
 Au^1_n 
=
\sum_{k=1}^{n}\lambda_k\varphi_k\left(1 - \lambda_k t^{\rho_1} E_{\rho',\rho_1+1}(-\lambda_k t^{\rho_1},*)\right)v_k.
\]

Due to the Parseval equality we may write
\[
\| A u^1_n\|^2
=
 \sum_{k=1}^{n}
\left|
\lambda_k\varphi_k\left(1 - \lambda_k t^{\rho_1} E_{\rho',\rho_1+1}(-\lambda_k t^{\rho_1},*)\right)
\right|^2.
\]

The following estimate holds for $t>0$, \cite{MasahiroYamamoto}
\[
\left|1 - \lambda_k t^{\rho_1} E_{\rho',\rho_1+1}(-\lambda_k t^{\rho_1},*)
\right|\le
C \sum_{j=1}^{M} \frac{t^{\rho_1 - \rho_j}}{1 + \lambda_k t^{\rho_1}}.
\]

It follows that, we have
\[
\| A u^1_n\|^2\le C^2\sum_{k=1}^{n}
\left|
\varphi_k\sum_{j=1}^{M} \frac{\lambda_kt^{\rho_1}}{1 + \lambda_k t^{\rho_1}}t^{- \rho_j}
\right|^2.
\]

We use the fact 
\[
\frac{\lambda_k t^{\rho_1}}{1 + \lambda_k t^{\rho_1}}
\le 1.
\]

Therefore,  we get
\[
\| A u^1_n\|^2\le C^2\left|\sum_{j=1}^{M} t^{- \rho_j}\right|^2\sum_{k=1}^{n}
\left|
\varphi_k
\right|^2 <\infty,\quad  t>0.\]

By inequality \eqref{2.1}, for $0<\varepsilon<1$, we obtain
\[\| A u^2_n\|^2=
\sum_{k=1}^{n} \lambda_k^{2}
\left|
\int_{0}^{t}f_k(t-\xi)
\xi^{\rho_1-1}
E_{\rho',\rho_1}(-\lambda_k \xi^{\rho_1},*)
\, d\xi
\right|^{2}
\le
C^2
\sum_{k=1}^{n}
\left(
\int_{0}^{t}
\xi^{\varepsilon\rho_1-1}
\lambda_k^{\varepsilon}
|f_k(t-\xi)|\, d\xi
\right)^{2}.
\]

Using the generalized Minkowski inequality,  we have
$$
C^2
\sum_{k=1}^{n}
\left(
\int_{0}^{t}
\xi^{\varepsilon\rho_1-1}
\lambda_k^{\varepsilon}
|f_k(t-\xi)|\, d\xi
\right)^{2}
\le
C^2\left(
\int_{0}^{t}
\xi^{\varepsilon\rho_1-1}
\left(
\sum_{k=1}^{n}
\left|
\lambda_k^{\varepsilon} f_k(t-\xi)
\right|^{2}
\right)^{1/2}
d\xi
\right)^{2}$$
$$\le \frac{C^2t^{2\varepsilon\rho_1}}{(\varepsilon\rho_1)^2} \, \max_{t \in [0,T]} \|  f(t) \|_{\varepsilon}^2.
$$

Therefore
\begin{equation}\label{estimate}
\| A u_n\|^2\le \left(C^2\|\varphi\|^2\left|\sum_{j=1}^{M} t^{- \rho_j}\right|^2+C^2_{\varepsilon,\rho_1} \, \max_{t \in [0,T]} \| f(t) \|_{\varepsilon}^2\right).
\end{equation}

Further, from equation \eqref{1.2} one has
\[
\sum_{j=1}^Mq_j\partial_t^{\rho_j}u_n(t) = -Au_n(t) + \sum_{k=1}^{n} f_k(t) v_k.
\]

Therefore, from the above reasoning, we obtain
\[
\left\|
\sum_{j=1}^{M} q_j \partial_t^{\rho_j} u_n(t)
\right\|^2
\le
\left(
C^2\|\varphi\|^2\left|\sum_{j=1}^{M} t^{- \rho_j}\right|^2 +C^2_{\varepsilon,\rho_1} \max_{t \in [0,T]} \|  f(t) \|_{\varepsilon}^2
\right)
+
\|f(t)\|^2.
\]

Thus, we have completed the rationale that \eqref{3.1} is a solution to the forward problem. 
It follows from \eqref{estimate} that estimate \eqref{t3.1} holds.

 \emph{Uniqueness}.    Suppose we have two solutions: $u_1(t),\, u_2(t)$ and set $u(t) = u_1(t)-u_2(t).$ Then, we have
\begin{equation}\label{3.3}
\begin{cases}
\displaystyle \sum_{j=1}^{M} q_j \, \partial_t^{\rho_j} u(t) + A u(t) = 0, \\[2mm]
u(0) = 0.
\end{cases}
\end{equation}
Set
\[
w_k(t) = (u(t), v_k).
\]
It follows from \eqref{3.3} that
\[
\sum_{j=1}^Mq_j\partial_t^{\rho_j} w_k(t)
= \left(\sum_{j=1}^Mq_j\partial_t^{\rho_j} u(t), v_k\right)
= - (A u(t), v_k)
= - (u(t), A v_k)
= - \lambda_k w_k(t).
\]
Consequently, $w_k(t)$ satisfies the Cauchy-type problem
\begin{equation}\label{3.4}
\begin{cases}
\displaystyle \sum_{j=1}^{M} q_j \, \partial_t^{\rho_j} w_k(t) + \lambda_k w_k(t) = 0, \\[2mm]
w_k(0) = 0.
\end{cases}
\end{equation}
From \eqref{3.1} ($\varphi_k=0, \,f_k=0$), it follows that the function defined by equation \eqref{3.4} is identically zero:
$
w_k(t) \equiv 0.
$
Consequently, due to the completeness of the system of eigenfunctions \(\{v_k\}\),
we have $ u(t) = 0 $.
Thus, the uniqueness of the solution of problem \eqref{1.2} is established.

Thus the proof of Theorem \ref{thm3.1} is complete.
\end{proof}
Here is an obvious consequence of estimate \eqref{t3.1}:
\begin{corollary}\label{cor3.2}
    Let $\varphi \in H$ and $f(t) \in C([0,T];D(A^{\varepsilon}))$ for some $\varepsilon \in(0,1)$.
Then there exists a constant $C$ depending on $T,\,\rho_1$ and $\varepsilon$ such that
\begin{equation}\label{3.7}
\|u(T)\|_{1} \le C\Big(\|\varphi\|+\max_{t\in[0,T]}\|f(t)\|_{\varepsilon}\Big).
\end{equation}
\end{corollary}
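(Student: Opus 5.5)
The plan is to specialize estimate \eqref{t3.1} of Theorem \ref{thm3.1} to $t=T$ and absorb the time-dependent factor into the constant. The hypotheses of Corollary \ref{cor3.2} coincide with those of Theorem \ref{thm3.1}: $\varphi\in H$ and $f\in C([0,T];D(A^{\varepsilon}))$ with $\varepsilon\in(0,1)$. Hence the forward problem \eqref{1.2} has a unique solution $u$. For every $t>0$, and in particular for $t=T>0$, it satisfies
\[
\|u(T)\|_1\le C\Bigl(\|\varphi\|\sum_{j=1}^{M}T^{-\rho_j}+\max_{t\in[0,T]}\|f(t)\|_{\varepsilon}\Bigr),
\]
where $C$ depends only on $\varepsilon$ and $\rho_1$.

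Since $T>0$ is fixed and $0<\rho_M<\dots<\rho_1<1$, the quantity $S_T:=\sum_{j=1}^M T^{-\rho_j}$ is a finite positive number depending only on $T$ and the orders $\rho_j$. It can also be bounded by $M\max(T^{-\rho_1},T^{-\rho_M})$ if a cleaner form is wanted. Setting $C_1:=C\max(1,S_T)$ then gives
\[
\|u(T)\|_1\le C_1\Bigl(\|\varphi\|+\max_{t\in[0,T]}\|f(t)\|_{\varepsilon}\Bigr),
\]
which is \eqref{3.7}.

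There is no real obstacle here. The only point to watch is the bookkeeping of constants. Estimate \eqref{t3.1} holds pointwise for each fixed $t>0$, and evaluating it at $t=T$ is legitimate because $T$ lies in the range where the series \eqref{3.1} was shown to converge in $D(A)$. Strictly speaking, the new constant also depends on the remaining orders $\rho_2,\dots,\rho_M$ through $S_T$, and on the constant in the Mittag-Leffler bound from \cite{MasahiroYamamoto}. Following the paper's convention, I would record it as depending on $T$, $\rho_1$ and $\varepsilon$, with the other fixed data $\rho_j$ and $q_j$ regarded as part of the problem.
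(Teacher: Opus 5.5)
Your proposal is correct and matches the paper's argument: the paper states Corollary \ref{cor3.2} as an immediate consequence of \eqref{t3.1}, obtained by setting $t=T$ and absorbing $\sum_{j=1}^{M}T^{-\rho_j}$ into the constant. Your caveat about hidden dependence of the constant is apt (the $f$-term in the proof of Theorem \ref{thm3.1} also carries a factor $t^{\varepsilon\rho_1}\le T^{\varepsilon\rho_1}$). It causes no difficulty, because the corollary's constant is allowed to depend on $T$.
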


\section{Lower bound for an infinitesimal denominator}
\label{sec:4}

In this section, the existence and uniqueness of the solution to the backward problem are established, and stability estimates of the solution are derived.

Problem \eqref{1.1} is ill-posed in the sense of Hadamard, since even a small change in $u(T)$ measured in the norm of $H$ can result in arbitrarily large changes in the initial data. To illustrate this, assume that $f(t) \equiv 0$ and
\[
u(T) = \lambda_k^{-1+\varepsilon} v_k, \quad \varepsilon > 0.
\]
Then the corresponding solution of problem \eqref{1.1} is given by
\[
u(t) = \lambda_k^{-1+\varepsilon}  
\frac{1 - \lambda_k t^{\rho_1} E_{\rho',\rho_1+1}(-\lambda_k t^{\rho_1},*)}
{1 - \lambda_k T^{\rho_1} E_{\rho',\rho_1+1}(-\lambda_k T^{\rho_1},*)}
 v_k,
\]
and
\[
 u(0)
=
\lambda_k^{-1+\varepsilon}  
\frac{1}{1 - \lambda_k T^{\rho_1} E_{\rho',\rho_1+1}(-\lambda_k T^{\rho_1},*)}
 v_k.
\]

Therefore, on the one hand,
\[
\|u(T)\| = \lambda_k^{-1+\varepsilon}
\]
and it tends to zero as $k \to \infty$ (even $\|u(T)\|_a \to 0$ for any $a < 1-\varepsilon$),
on the other hand, according to the asymptotic estimate given in Lemma \ref{l2.4},
\[
\left\| u(0) \right\|
=
\lambda_k^{-1+\varepsilon} 
\frac{1}{1 - \lambda_k T^{\rho_1} E_{\rho',\rho_1+1}(-\lambda_k T^{\rho_1},*)}
\to \infty \quad \text{when } k \to \infty.
\]

However, if the norm of $u(T)$ is considered in the space $D(A)$, the situation changes significantly. In this case, we have
\[
\|u(T)\|_1 = \lambda_k^{\varepsilon},
\]
which diverges as $k \to \infty$.
\begin{theorem}\label{thm4.1}
    Let $f(t) \equiv 0$. Then for any $\Phi \in D(A)$ problem \eqref{1.1} has a unique solution. 
Moreover, there exist constants $C_{1}, C_{2} > 0$ such that
\begin{equation}\label{4.1}
C_{1}\,\|u(0)\|
\le
\|u(T)\|_1
\le
C_{2}\,\|u(0)\|.
\end{equation}
\end{theorem}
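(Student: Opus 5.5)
The plan is to reduce the backward problem with $f\equiv 0$ to the forward problem \eqref{1.2} with an unknown initial vector $\varphi=u(0)$, determined coefficientwise. Write
\[
b_k(t)=1-\lambda_k t^{\rho_1}E_{\rho',\rho_1+1}(-\lambda_k t^{\rho_1},*).
\]
By \eqref{3.1}, every solution of the forward problem has Fourier coefficients $u_k(t)=\varphi_k b_k(t)$. The condition $u(T)=\Phi$ then forces
\[
\varphi_k=\frac{\Phi_k}{b_k(T)},
\]
provided $b_k(T)\neq 0$. So the candidate solution is the function \eqref{3.1} with this choice of $\varphi$. Uniqueness then follows from the uniqueness part of Theorem \ref{thm3.1}: the difference of two solutions has coefficients $w_k(t)=w_k(0)b_k(t)$, and $w_k(T)=0$ together with $b_k(T)\ne0$ gives $w_k(0)=0$.

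The key step is a two-sided bound
\[
\frac{c_1}{\lambda_k}\le b_k(T)\le \frac{c_2}{\lambda_k}\qquad\text{for all } k,
\]
with positive constants depending on $T$, $\rho_j$ and $q_j$.

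\emph{Upper bound.} This follows from the estimate already quoted in the proof of Theorem \ref{thm3.1}:
\[
|b_k(T)|\le C\sum_j \frac{T^{\rho_1-\rho_j}}{1+\lambda_k T^{\rho_1}}\le \frac{C'}{\lambda_k}.
\]

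\emph{Lower bound for large $k$.} This is the main obstacle, because $b_k(T)$ is a small difference of quantities near $1$. I would apply Lemma \ref{l2.4} with $\beta=\rho_1+1>2\rho_1$, $z_1=-\lambda_kT^{\rho_1}$ (so $\arg z_1=\pi$), and $z_j=-q_jT^{\rho_1-\rho_j}\in[-K,0)$. The leading term is
\[
-z_1^{-1}\Gamma(1)^{-1}=\frac{1}{\lambda_kT^{\rho_1}}.
\]
Hence $\lambda_kT^{\rho_1}E_{\rho',\rho_1+1}=1+O(\lambda_k^{-1})$, and the order-$1$ parts cancel in $b_k(T)$. One must therefore keep the second term of the expansion: $b_k(T)$ equals $-\lambda_k T^{\rho_1}$ times the $z_1^{-2}$ coefficient, up to $O(\lambda_k^{-2})$. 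That coefficient is
\[
\frac{1}{\Gamma(1-\rho_1)}+\sum_{j\ge2}\frac{q_jT^{\rho_1-\rho_j}}{\Gamma(1-\rho_j)}
\]
(the sign works out so that the contribution to $b_k$ is positive), and it is strictly positive since $0<\rho_j<1$ and $q_j>0$. Using the expansion with $p=2$, this gives
\[
b_k(T)=\frac{1}{\lambda_k}\sum_{j=1}^M\frac{q_jT^{-\rho_j}}{\Gamma(1-\rho_j)}+O(\lambda_k^{-2}),
\]
so the lower bound holds for $k\ge k_0$.

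\emph{Lower bound for small $k$.} For the finitely many $k<k_0$ I need $b_k(T)\neq0$, and in fact $b_k(T)>0$. Here I would use that $b_k(t)$ is the solution of the scalar problem $\sum q_j\partial_t^{\rho_j}w+\lambda_k w=0$, $w(0)=1$. This solution is positive, being completely monotone as in \cite{MasahiroYamamoto}; alternatively one can argue by a maximum-principle/comparison argument. Taking the minimum over the finitely many $k<k_0$ of $\lambda_k b_k(T)$ finishes the two-sided bound.

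With the two-sided bound in hand, both the estimate and existence are immediate. By Parseval,
\[
\|u(T)\|_1^2=\sum_k\lambda_k^2 b_k(T)^2|\varphi_k|^2,
\]
and since $\lambda_k b_k(T)\in[c_1,c_2]$ this gives \eqref{4.1} with $C_1=c_1$ and $C_2=c_2$. In particular, $\Phi\in D(A)$ is equivalent to $\varphi\in H$ with $\|\varphi\|\le c_1^{-1}\|\Phi\|_1$. Theorem \ref{thm3.1} then shows that \eqref{3.1} with this $\varphi$ is a genuine solution, with $u(t)\in D(A)$ for $t>0$, and it satisfies $u(T)=\Phi$ by construction.
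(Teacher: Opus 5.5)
Your proposal is correct and follows essentially the same route as the paper. You solve for $\varphi_k=\Phi_k/b_k(T)$, use Lemma~\ref{l2.4} with $\beta=\rho_1+1$ to get $b_k(T)=\lambda_k^{-1}\sum_{j=1}^M q_jT^{-\rho_j}/\Gamma(1-\rho_j)+O(\lambda_k^{-2})$ for the lower bound, and take the upper bound from the forward estimate behind Corollary~\ref{cor3.2}. Your version is more careful than the paper's: the paper never checks $b_k(T)\neq 0$ for the finitely many small $k$, which existence and uniqueness both need and which you supply through positivity of the relaxation function, and its step $\bigl(1+O(\cdot)\bigr)^{-2}\le 1$ is not justified as written, whereas your two-sided bound $c_1\le\lambda_k b_k(T)\le c_2$ settles this cleanly.
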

\begin{proof}

Let $\Phi \in D(A)$ and let $\Phi_k$ be its Fourier coefficients. Then
\[
\|\Phi\|_{1}^2 =
\sum_{k=1}^{\infty} \lambda_k^{2} |\Phi_k|^{2} < \infty.
\]
From the condition in \eqref{1.1}, and the fact that $f(t)\equiv 0$ in \eqref{3.1}, we obtain
\[
\varphi_k =
\frac{\Phi_k}{1 - \lambda_k T^{\rho_1} E_{\rho',\rho_1+1}(-\lambda_k T^{\rho_1},*)}.
\]
By Lemma \ref{l2.4}, we have 
\begin{equation}\label{a}
\begin{aligned}
\sum_{k=1}^{\infty} \varphi_k^{2}
&=
\sum_{k=1}^{\infty}
\frac{\Phi_k^{2}}{\left(1 - \lambda_k T^{\rho_1} E_{\rho',\rho_1+1}(-\lambda_k T^{\rho_1},*)\right)^{2}}\\
&=\sum_{k=1}^{\infty}
\frac{\Phi_k^{2}}{\left(1 - \lambda_k T^{\rho_1} \left(\frac{1}{\lambda_k T^{\rho_1}}-\frac{C}{(\lambda_k T^{\rho_1})^2}+O((\lambda_k T^{\rho_1})^{-3})\right)\right)^{2}}
\\&=
 \frac{1}{C^2}T^{2\rho_1}\sum_{k=1}^{\infty}
\lambda_k^{2}\Phi_k^{2}\left(\frac{1}{1+O((\lambda_k T^{\rho_1})^{-3})}\right)^2\le \frac{1}{C^2}T^{2\rho_1}\sum_{k=1}^{\infty}
\lambda_k^{2}\Phi_k^{2}
<
\infty.
\end{aligned}
\end{equation}
 The following function (see \eqref{3.1})
\[
u(t)=
\sum_{k=1}^{\infty}\varphi_k
\left(1 - \lambda_k t^{\rho_1} E_{\rho',\rho_1+1}(-\lambda_k t^{\rho_1},*)\right) v_k,
\]
is the unique solution to the forward problem \eqref{1.2} with $f(t)\equiv 0$ and the initial function $\varphi$.  
Moreover,
$
u(T)=\Phi
$
and from \eqref{a}, we have
\[
\|u(0)\|=\|\varphi\|
\le
C\|\Phi\|_{1}
=
C\|u(T)\|_{1}.
\]
The second inequality in \eqref{4.1} is already proved in Theorem \ref{thm3.1} (Corollary \ref{cor3.2}).  
Thus, Theorem \ref{thm4.1} is proved.
\end{proof}
\begin{remark}
It follows from the Theorem that in the estimate \eqref{4.1} one cannot replace 
$\|u(T)\|_{1}$ by $\|u(T)\|_{1-\varepsilon}$ with $\varepsilon > 0$. A small change of $u(T)$ in the norm of space $D(A)$ leads to small changes in the initial data.
\end{remark}
\begin{theorem}\label{thm4.2}
    Let 
$
f(t) \in C([0, T]; D(A^{\varepsilon}))
$
with some \(\varepsilon \in (0,1)\). Then for any 
\(\Phi \in D(A)\), problem \eqref{1.1} has a unique solution.
Moreover, there exists a constant $C>0$ such that
\begin{equation}
\|u(0)\|
\le
C\left(
\|u(T)\|_{1}
+
\max_{t \in [0,T]}
\| f(t)\|_{\varepsilon}
\right).
\end{equation}
\end{theorem}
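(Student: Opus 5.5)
Reduce to Theorem \ref{thm4.1} by splitting off the source term with an auxiliary forward problem. Let $w(t)$ be the solution of the forward problem \eqref{1.2} with the given $f$ and zero initial datum $w(0)=0$. By Theorem \ref{thm3.1} it exists and is unique, and it is given by the series $u^2(t)$ in \eqref{sum2}. By Corollary \ref{cor3.2} applied with $\varphi=0$,
\[
\|w(T)\|_1 \le C \max_{t\in[0,T]}\|f(t)\|_{\varepsilon},
\]
so in particular $w(T)\in D(A)$. Now set $\tilde\Phi=\Phi-w(T)$; since $\Phi\in D(A)$, also $\tilde\Phi\in D(A)$. Apply Theorem \ref{thm4.1} to the homogeneous backward problem with final datum $\tilde\Phi$. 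This gives a unique $v(t)$ with $v(T)=\tilde\Phi$, $f\equiv0$, and
\[
\|v(0)\|\le C_1^{-1}\|\tilde\Phi\|_1 .
\]
Then $u=v+w$ solves \eqref{1.1}, because the equation is linear, $u(T)=\tilde\Phi+w(T)=\Phi$, and $u(0)=v(0)$.

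For the estimate, note that $u(0)=v(0)$ and use the triangle inequality:
\[
\|u(0)\|\le C\|\Phi-w(T)\|_1\le C\big(\|u(T)\|_1+\|w(T)\|_1\big)\le C\Big(\|u(T)\|_1+\max_{t\in[0,T]}\|f(t)\|_\varepsilon\Big).
\]

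For uniqueness, take the difference of two solutions. It solves the homogeneous backward problem with $\Phi=0$. Its Fourier coefficients satisfy $\varphi_k\big(1-\lambda_kT^{\rho_1}E_{\rho',\rho_1+1}(-\lambda_kT^{\rho_1},*)\big)=0$, so $\varphi_k=0$ provided the factor $1-\lambda_kT^{\rho_1}E_{\rho',\rho_1+1}(-\lambda_kT^{\rho_1},*)$ never vanishes. Then uniqueness of the forward problem (Theorem \ref{thm3.1}) gives that the difference is identically zero. This nonvanishing is already implicit in Theorem \ref{thm4.1}, which I will invoke directly.

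The main obstacle is that hidden point: a lower bound on the denominator for every $k$, not only asymptotically as in \eqref{a}. Lemma \ref{l2.4} handles large $\lambda_k T^{\rho_1}$, where the denominator behaves like $c/(\lambda_kT^{\rho_1})$ with $c>0$. For the finitely many remaining $k$, I would first try to argue positivity directly: $1-\lambda t^{\rho_1}E_{\rho',\rho_1+1}(-\lambda t^{\rho_1},*)$ is the solution of the scalar multi-term relaxation equation with unit initial datum. By the complete monotonicity results of \cite{MasahiroYamamoto} this solution is strictly positive, so each of these finitely many factors is positive.
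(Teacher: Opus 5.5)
Your proposal is correct and follows the paper's proof essentially verbatim: the same splitting into a forward problem with source $f$ and zero initial datum (controlled by Corollary~\ref{cor3.2}) plus a homogeneous backward problem whose final datum is $\Phi$ minus that forward solution at $T$ (controlled by Theorem~\ref{thm4.1}), with only the names of the two pieces swapped. Your extra observation is a genuine point the paper leaves implicit in Theorem~\ref{thm4.1}: the factor $1-\lambda_kT^{\rho_1}E_{\rho',\rho_1+1}(-\lambda_kT^{\rho_1},*)$ must be nonzero for every $k$, not only for large $\lambda_k$ where Lemma~\ref{l2.4} applies, and positivity of the scalar multi-term relaxation function (it is completely monotone) settles this.
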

\begin{proof}
Consider the following two auxiliary problems $(u=v+w)$ :
\begin{equation}\label{4.4}
\begin{cases}
\displaystyle \sum_{j=1}^{M} q_j \partial_t^{\rho_j} v(t) + A v(t) = f(t), & 0 < t < T,\\
 v(0) = 0,
\end{cases}
\end{equation}
and
\begin{equation}\label{4.5}
\begin{cases}
\displaystyle \sum_{j=1}^{M} q_j \partial_t^{\rho_j} w(t) + A w(t) = 0, & 0 < t < T,\\
w(T) = \Phi - v(T).
\end{cases}
\end{equation}
If $ f(t) \in C([0, T ]; D(A^{\varepsilon}))$, then there exists a unique solution to problem \eqref{4.4} and (see Corollary \ref{cor3.2})
\begin{equation}\label{4.6}
\|v(T)\|_{1} \le C \max_{t \in [0,T]} \|f(t)\|_{\varepsilon}.
\end{equation}
If $\Phi \in D(A)$, then there exists a unique solution to problem \eqref{4.5} and (see Theorem \ref{thm4.1})
\begin{equation}\label{4.7}
\| w(0)\|\le C \|w(T)\|_{1}.
\end{equation}
Setting $u = v + w$, we see that
\[
u(T) = \Phi - v(T) + v(T) = \Phi.
\]
Then we can verify that $u$ is the unique solution to problem \eqref{1.1} and the estimates \eqref{4.6} and \eqref{4.7} imply
\[
\| u(0)\|
=
 \| w(0)\|
\le
C\|w(T)\|_{1}
\le
C(\|\Phi\|_{1} + \|v(T)\|_{1})
\le
C\left(
\|\Phi\|_{1}
+
\max_{t \in [0,T]}
\| f(t)\|_{\varepsilon}
\right).
\]
Thus, Theorem \ref{thm4.2} is proved.
\end{proof}
\subsection{Conditional stability}

Suppose that the initial data $u(0) = \varphi$ satisfies the following a priori estimate:
\begin{equation}\label{4.8}
\|\varphi\|_{\varepsilon}^{2} 
= \sum_{k=1}^{\infty} \lambda_k^{2\varepsilon} |\varphi_k|^{2} 
\leq B_0^{2},
\end{equation}
where $\varepsilon > 0$ and $B_0$ are positive constants, and consider a class of functions that satisfy this
condition.

\emph{Conditional stability} refers to a situation where a problem that is generally unstable becomes stable when restricted to a certain class of functions. As noted above, the solution to problem \(\eqref{1.1}\) is inherently unstable: even a small change in \(u(T)\) can produce a large change in the initial data \(u(0)\). However, if stability is ensured for a specific subset of initial data \(u(0)\) (see \(\eqref{4.8}\)), the problem is said to be conditionally stable.

The following statement is true:

\begin{theorem}\label{thm4.3}
    Let $\varphi\in D(A^{\varepsilon})$ satisfy condition \eqref{4.8}. Then there is a constant $C$ depending on $T,\,\varepsilon$ and $\rho_1$ such that
 $$
\|\varphi\|\le C\left[\|\Phi\|+\max_{t \in [0,T]} \|  f(t) \|\right]^{\frac{\varepsilon}{1+\varepsilon}}B_0^{\frac{1}{1+\varepsilon}}.
$$
\end{theorem}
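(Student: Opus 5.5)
Our approach is the standard interpolation (Hölder) argument in the Fourier coefficients. We reduce to the homogeneous case and combine the lower bound for the denominator with the a priori bound \eqref{4.8}.

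\emph{Step 1 (reduction).} As in the proof of Theorem \ref{thm4.2}, we split $u=v+w$. Here $v$ solves \eqref{4.4} with $v(0)=0$, and $w$ solves the homogeneous equation with $w(0)=\varphi$ and $w(T)=\Phi-v(T)$. The representation \eqref{sum2} together with Lemma \ref{l2.1} gives, for each $k$,
\[
|v_k(T)|\le \int_0^T |f_k(T-\xi)|\,\xi^{\rho_1-1}\,\bigl|E_{\rho',\rho_1}(-\lambda_k\xi^{\rho_1},*)\bigr|\,d\xi .
\]
By the generalized Minkowski inequality this yields $\|v(T)\|\le C\,T^{\rho_1}\max_{t}\|f(t)\|$. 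Hence $\|w(T)\|\le \|\Phi\|+C\max_t\|f(t)\|$, and it suffices to prove
\[
\|\varphi\|\le C\,\|w(T)\|^{\frac{\varepsilon}{1+\varepsilon}}B_0^{\frac{1}{1+\varepsilon}}
\]
for the homogeneous problem, in which $w_k(T)=\varphi_k\,m_k$ with $m_k=1-\lambda_kT^{\rho_1}E_{\rho',\rho_1+1}(-\lambda_kT^{\rho_1},*)$.

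\emph{Step 2 (lower bound for $m_k$).} We need $|m_k|\ge c\,\lambda_k^{-1}$ for all $k$, with $c$ depending on $T$ and $\rho_1$. For large $\lambda_k$ this follows from Lemma \ref{l2.4} with $\beta=\rho_1+1>2\rho_1$, exactly as in \eqref{a}: there $m_k\sim C/(\lambda_kT^{\rho_1})$, and $C=1/\Gamma(1-\rho_1)>0$ is the leading coefficient. For the finitely many small $\lambda_k$ we need $m_k\neq 0$. We would obtain this from the fact that $m_k$ is the value at $T$ of the solution of the scalar relaxation equation with initial value $1$, which is positive; this is the known complete monotonicity/positivity of this multinomial solution, cf.\ \cite{MasahiroYamamoto}. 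Continuity in $\lambda$ on a compact range then gives a uniform positive bound. This positivity step is the one we expect to be the main obstacle. If a clean citation is unavailable, we would argue by uniqueness: if $m_k=0$, then $\tilde w(t)=m_k(t)$ solves a backward problem with zero final data, which contradicts the uniqueness established in Theorem \ref{thm4.1}. Thus $m_k\ne0$ for every $k$, and combined with the asymptotics this gives $\inf_k \lambda_k|m_k|>0$.

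\emph{Step 3 (Hölder interpolation).} We write
\[
|\varphi_k|^2=\bigl(|\varphi_k|^2\lambda_k^{-2}\bigr)^{\frac{\varepsilon}{1+\varepsilon}}\bigl(\lambda_k^{2\varepsilon}|\varphi_k|^2\bigr)^{\frac{1}{1+\varepsilon}},
\]
and check the exponent of $\lambda_k$: $-2\varepsilon/(1+\varepsilon)+2\varepsilon/(1+\varepsilon)=0$. Applying Hölder's inequality with exponents $p=(1+\varepsilon)/\varepsilon$ and $q=1+\varepsilon$ gives
\[
\|\varphi\|^2\le\Bigl(\sum_k\lambda_k^{-2}|\varphi_k|^2\Bigr)^{\frac{\varepsilon}{1+\varepsilon}}\Bigl(\sum_k\lambda_k^{2\varepsilon}|\varphi_k|^2\Bigr)^{\frac{1}{1+\varepsilon}}.
\]
By Step 2, $\lambda_k^{-1}|\varphi_k|\le c^{-1}|m_k\varphi_k|=c^{-1}|w_k(T)|$. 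The first factor is therefore at most $C\|w(T)\|^{2\varepsilon/(1+\varepsilon)}$, and the second is at most $B_0^{2/(1+\varepsilon)}$ by \eqref{4.8}. Taking square roots and inserting Step 1 gives the claimed estimate.
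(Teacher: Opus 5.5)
Your proposal is correct and follows essentially the paper's argument. Your $w_k(T)$ is the paper's $B_k$, and both proofs combine H\"older's inequality with exponents $(1+\varepsilon)/\varepsilon$, $1+\varepsilon$ and the lower bound $|m_k|\ge c\lambda_k^{-1}$; you apply it to the factor $\sum_k\lambda_k^{-2}|\varphi_k|^2$, the paper to the factor containing $|m_k|^{-2\varepsilon}$, which is immaterial.

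Your Step 2 fills a point the paper passes over, since Lemma \ref{l2.4} only controls large $\lambda_k$. Rely on the positivity of the relaxation function there, not on the fallback. The fallback is circular: uniqueness in Theorem \ref{thm4.1} is equivalent to $m_k\neq0$, and its proof simply divides by $m_k$. Also, the leading coefficient of $m_k$ is $\sum_{j=1}^{M}q_jT^{\rho_1-\rho_j}/\Gamma(1-\rho_j)$, not $1/\Gamma(1-\rho_1)$; it is still positive, so your bound stands.
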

\begin{proof}
    Let us take a solution \eqref{3.1} with an unknown initial function $\varphi$ and use condition
$u(T) = \Phi$ to determine this unknown function. Then the Fourier coefficients of $\varphi$ has
the form
$$
\varphi_k=\frac{1}{1 - \lambda_k T^{\rho_1} E_{\rho',\rho_1+1}(-\lambda_k T^{\rho_1},*)}\left(\Phi_k-\int_0^Tf_k(T-\xi)\xi^{\rho_1-1}E_{\rho',\rho_1}(-\lambda_k \xi^{\rho_1},*)d\xi\right).
$$
We introduce the following notation:
$$B_k=\Phi_k-\int_0^Tf_k(T-\xi)\xi^{\rho_1-1}E_{\rho',\rho_1}(-\lambda_k \xi^{\rho_1},*)d\xi.$$
Then, for $2=\cfrac{2\varepsilon}{1+\varepsilon}+\cfrac{2}{1+\varepsilon}$, we have 
$$
\|\varphi\|^{2} 
= \sum_{k=1}^{\infty} \left| \frac{B_k}{1 - \lambda_k T^{\rho_1} E_{\rho',\rho_1+1}(-\lambda_k T^{\rho_1},*)} \right|^{2}
= \sum_{k=1}^{\infty} \frac{|B_k|^{\frac{2\varepsilon}{1+\varepsilon}} \, |B_k|^{\frac{2}{1+\varepsilon}}}{\left| 1 - \lambda_k T^{\rho_1} E_{\rho',\rho_1+1}(-\lambda_k T^{\rho_1},*) \right|^{2}}.
$$
Applying the Hölder inequality with parameters 
$p = \cfrac{1+\varepsilon}{\varepsilon}$ and $q = 1+\varepsilon$, we obtain
$$
\|\varphi\|^2 \le 
\left( \sum_{k=1}^{\infty} |B_k|^2 \right)^{\frac{\varepsilon}{1+\varepsilon}}$$
$$\times\left( \sum_{k=1}^{\infty} 
\frac{1}{|1 - \lambda_k T^{\rho_1} E_{\rho',\rho_1+1}(-\lambda_k T^{\rho_1},*)|^{2\varepsilon}}
\left| \frac{B_k}{1 - \lambda_k T^{\rho_1} E_{\rho',\rho_1+1}(-\lambda_k T^{\rho_1},*)} \right|^2
\right)^{\frac{1}{1+\varepsilon}}.
$$
We use Lemma \ref{l2.4} to get
$$
\sum_{k=1}^{\infty}
\frac{1}{|1 - \lambda_k T^{\rho_1} E_{\rho',\rho_1+1}(-\lambda_k T^{\rho_1},*)|^{2\varepsilon}}
\left| \frac{B_k}{1 - \lambda_k T^{\rho_1} E_{\rho',\rho_1+1}(-\lambda_k T^{\rho_1},*)} \right|^2
$$
$$\le
\frac{T^{2\varepsilon\rho_1}}{C^{2\varepsilon}}
\sum_{k=1}^{\infty}
\lambda_k^{2\varepsilon} |\varphi_k|^2
\le
\frac{T^{2\varepsilon\rho_1}B_0^2}{C^{2\varepsilon}}.
$$
Using the inequality $(a-b)^2\le 2(a^2+b^2)$, we have
$$
\sum_{k=1}^{\infty} |B_k|^2 
\le 
2\|\Phi\|^2 
+ 
2 \sum_{k=1}^{\infty}
\left[
\int_0^T\left|f_k(T-\xi)\xi^{\rho_1-1}E_{\rho',\rho_1}(-\lambda_k \xi^{\rho_1},*)d\xi\right|
\right]^2.
$$
By inequality \eqref{2.1}, for $0<\varepsilon<1$, and using the generalized Minkowski inequality, we obtain
$$
\sum_{k=1}^{\infty}
\left[
\int_0^T\left|f_k(T-\xi)\xi^{\rho_1-1}E_{\rho',\rho_1}(-\lambda_k \xi^{\rho_1},*)d\xi\right|
\right]^2\le C^2\sum_{k=1}^{n}
\left[
\int_{0}^{T}
\xi^{\varepsilon\rho_1-1}
\lambda_k^{\varepsilon-1}
|f_k(T-\xi)|\, d\xi
\right]^{2}
$$
$$\le
C^2\left(
\int_{0}^{T}
\xi^{\varepsilon\rho_1-1}
\left(
\sum_{k=1}^{n}
\left|
\lambda_k^{\varepsilon-1} f_k(T-\xi)
\right|^{2}
\right)^{1/2}
d\xi
\right)^{2}$$
$$\le C^2\left(
\int_{0}^{T}
\xi^{\varepsilon\rho_1-1}
\left(
\sum_{k=1}^{n}
\left|
 f_k(T-\xi)
\right|^{2}
\right)^{1/2}
d\xi
\right)^{2} \le  \frac{C^2T^{2\varepsilon\rho_1}}{(\varepsilon\rho_1)^2} \, \max_{t \in [0,T]} \|  f(t) \|^2.
$$
Therefore
$$
\sum_{k=1}^{\infty} |B_k|^2 
\le 
2\|\Phi\|^2 
+ 
2 \frac{C^2T^{2\varepsilon\rho_1}}{(\varepsilon\rho_1)^2} \, \max_{t \in [0,T]} \|  f(t) \|^2.
$$
Finally, we get
$$
\|\varphi\|\le C\left[\|\Phi\|+\max_{t \in [0,T]} \|  f(t) \|\right]^{\frac{\varepsilon}{1+\varepsilon}}B_0^{\frac{1}{1+\varepsilon}}.
$$
Thus, Theorem \ref{thm4.3} is proved.

\end{proof}

\section{Conclusion}\label{sec5}

In this study, we investigated the backward problem for a multi-term time-fractional diffusion equation with Caputo fractional derivatives in a Hilbert space setting. The analysis commenced with the corresponding forward problem, where existence, uniqueness, and essential a priori estimates of the solution were established, providing a solid foundation for the backward problem.
The backward problem, being ill-posed in the sense of Hadamard, presents significant analytical challenges, particularly due to the appearance of the multinomial Mittag--Leffler function in the denominator of the solution. A precise characterization of the asymptotic behavior of this function was necessary for the analysis. These asymptotic properties, which are nontrivial, have been rigorously established in the authors' recent works \cite{AshurovShamuratov}, forming a crucial basis for the present study.
Under suitable smoothness assumptions on the final data, we proved the existence and uniqueness of the solution and derived corresponding stability estimates. Furthermore, we demonstrated the best possible smoothing property of the solution, showing that it belongs to the domain of the operator \(A\) for any positive time. The analysis, based on spectral methods and asymptotic estimates of the multinomial Mittag--Leffler function, not only addresses the ill-posedness but also provides deeper insight into the structure and regularity of the solution. In addition, we investigated the conditional stability of the backward problem. Our results show that, although the problem is generally ill-posed, stability can be achieved under a suitable a priori bound imposed on the initial data.
Overall, the results contribute to a better theoretical understanding of backward multi-term time-fractional diffusion problems and provide a rigorous foundation for further analytical investigations in this area. In future work, the authors will focus on the development of efficient regularization methods, including generalized Tikhonov as well as other regularization methods, for the stable reconstruction of the solution.

\end{document}